\titleformat{\section}{\normalfont\Large\bfseries}{\thesection}{1em}{}
\titleformat{\subsection}{\normalfont\large\bfseries}{\thesubsection}{1em}{}
\theoremstyle{plain}
\newtheorem{thm}{Theorem}[section]
\newtheorem*{question*}{Question}
\newtheorem*{questions*}{Questions}
\newtheorem{lemma}[thm]{Lemma}
\newtheorem*{thm*}{Theorem}
\newtheorem*{prop*}{Proposition}
\newtheorem{prop}[thm]{Proposition}
\newtheorem{cor}[thm]{Corollary}
\theoremstyle{definition}
\newtheorem{defi}[thm]{Definition}
\theoremstyle{remark}
\newtheorem{rmk}[thm]{Remark}
\newcommand{\Oo}{\mathcal{O}}
\DeclareMathOperator{\Pic}{Pic}
\DeclareMathOperator{\Spec}{Spec}
\newcommand{\NS}{\operatorname{NS}}
\newcommand{\Z}{\mathbb{Z}}
\newcommand{\Q}{\mathbb{Q}}
\newcommand{\Pp}{\mathbb{P}}
\newcommand{\p}{\mathfrak{p}}
\newcommand{\Ff}{\mathbf{F}}
\newcommand{\Char}{\mathrm{char}}
\numberwithin{equation}{section}
\begin{document}
\title{On the Chow group of elliptic surfaces over number fields}
\author{Domenico Valloni}
\affil{\'{E}cole polytechnique f\'{e}d\'{e}rale de Lausanne}
\maketitle
\begin{abstract}
Let $X$ be a smooth projective surface over a number field $K$. Assume that $X$ has an elliptic fibration over $\Pp^1_K$ with at least one singular fibre and a section. Let $\mathcal{X}/U$ be a smooth projective model of $X$ over some open subset $U \subset \Spec(\Oo_K)$. We show that $\ker\bigl(\mathrm{CH}^2(\mathcal{X}) \rightarrow \mathrm{CH}^2(X)\bigr)$ is a finitely generated group.
\end{abstract}
\section{Introduction}
Let $X$ be a smooth projective variety over a number field $K$. A conjecture originally attributed to Swinnerton-Dyer (see \cite{zbMATH04061376}[Section 5] and also \cite{MR1086888} and \cite{MR1159204}) says that the Chow groups $\mathrm{CH}^i(X)$ are finitely generated. At present, this conjecture is widely open, and we do not even know whether the $n$-torsion subgroup $\mathrm{CH}^2(X)[n]$ is finitely generated when $X$ is a surface, although by Rojtman's theorem \cite{MR577137} this is true over the algebraic closure of $K$. The strongest evidence concerning the finiteness of $\mathrm{CH}^2(X)_{\mathrm{tors}}$ is due to \cite{CT-R}[Theorem C], where the authors show that $\mathrm{CH}^2(X)_{\mathrm{tors}}$ is finitely generated whenever $H^2(X, \Oo_X) = 0$ (see also \cite{MR1300892}). We refer the reader to \cite{MR1744949} and \cite{OTS}, which contain surveys regarding this problem and a more comprehensive account of the literature.

In general, to study $\mathrm{CH}^2(X)$ one spreads out $X$ to a smooth projective model $\mathcal{X} / \Oo$, where $\Oo$ is a localization of the ring of integers of $K$ at finitely many places, and considers the localization sequence 
$$\bigoplus_{\p \subset \Oo} \mathrm{Pic}(\mathcal{X}_{\Ff_{\p}}) \rightarrow \mathrm{CH}^2(\mathcal{X}) \rightarrow \mathrm{CH}^2(X) \rightarrow 0$$
(note that this can be continued to the left using $K$-theory). Assume from now on that $H^1(X, \Oo_X) = 0$, so, after further localizing $\Oo$ if necessary, we have $\mathrm{Pic}(\mathcal{X}_{\Ff_{\p}}) = \NS(\mathcal{X}_{\Ff_{\p}})$ for every $\p \subset \Oo$. The difficulty when $H^2(X, \Oo_X) \neq 0$ arises from the fact that the rank of $\NS(\mathcal{X}_{\Ff_{\p}})$ can jump for infinitely many places (as for example happens for K3 surfaces), and one is not able to control the image of $\bigoplus_{\p \subset \Oo} \mathrm{NS}(\mathcal{X}_{\Ff_{\p}})$ in $\mathrm{CH}^2(\mathcal{X})$. 

Let us call this image $\Sigma(\mathcal{X}) \cong \ker(\mathrm{CH}^2(\mathcal{X}) \rightarrow \mathrm{CH}^2(X) )$. This group appeared in \cite{MR1177312}, where the case $X = E \times E$ is studied, with $E/ \Q$ an elliptic curve. In particular, they show that $\Sigma(\mathcal{X})$ is finite when $K = \Q$ and $E$ has complex multiplication (there are only finitely many such curves over $\Q$). Moreover, the case where $E$ is an arbitrary elliptic curve over $\Q$ is also investigated. The proof starts by interpreting the new divisors modulo $\p$ as graphs of the Frobenius, and then uses the theory of modular curves and their reduction modulo $\p$ developed in \cite{MR337993} to control these cycles in $\mathrm{CH}^2(\mathcal{X})$. This is in fact one of the very few instances for which $H^2(X, \Oo_X) \neq 0$ and we nevertheless understand the role of vertical cycles in $\mathrm{CH}^2(\mathcal{X})$. See also \cite{MR1417619}, where these ideas are pushed further to prove more finiteness results regarding self products of elliptic curves over $\Q$. Our aim in this brief article is to generalize Mildenhall's finiteness to elliptic fibrations over $\Pp^1$:
\begin{thm} \label{Main thm}
Let $X/K$ be a smooth projective surface endowed with an elliptic fibration $\pi \colon X \rightarrow \Pp^1_K$ with at least one singular fibre and a section. Let $\mathcal{X}/ \Spec(\Oo)$ be a smooth projective model of $X$ over some open subset of $\Spec(\Oo_{K})$. Then $\Sigma(\mathcal{X})$ is a finitely generated abelian group. 
\end{thm}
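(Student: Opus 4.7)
My plan is to decompose $\Sigma(\mathcal{X})$ fibrewise via the Shioda--Tate formula, isolate the troublesome contribution coming from Mordell--Weil growth modulo each prime $\mathfrak{p}$, and then express the new cycles as reductions modulo $\mathfrak{p}$ of globally defined Hecke-type correspondences on the fibred self-product $\mathcal{X} \times_{\mathbb{P}^1_\mathcal{O}} \mathcal{X}$. This adapts Mildenhall's strategy for $E \times E$ to the setting in which the projection $E \times E \to E$ is replaced by $\pi : X \to \mathbb{P}^1$ and the role of endomorphisms of $E$ is played by sections of $\pi$.

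\textbf{Shioda--Tate reduction.} After shrinking $\Spec(\mathcal{O})$, each geometric special fibre $\mathcal{X}_{\bar{\mathbb{F}}_\mathfrak{p}}$ is a smooth projective elliptic surface with a section whose Kodaira configuration of singular fibres matches that of $X_{\bar K}$. The Shioda--Tate formula decomposes $\mathrm{NS}(\mathcal{X}_{\bar{\mathbb{F}}_\mathfrak{p}})$ into the zero-section, the fibre class, the lattice spanned by non-identity components of reducible fibres, and the Mordell--Weil group $E_\mathfrak{p}(\bar{\mathbb{F}}_\mathfrak{p}(t))$, where $E/K(t)$ is the generic fibre. The first three pieces come from globally defined cycles on $\mathcal{X}$, and the image of the injective specialization $E(K(t)) \hookrightarrow E_\mathfrak{p}(\bar{\mathbb{F}}_\mathfrak{p}(t))$ (injective for almost all $\mathfrak{p}$ by Silverman's theorem) is again globally defined; both therefore contribute only a finitely generated subgroup of $\Sigma(\mathcal{X})$. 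It then suffices to control the image in $\mathrm{CH}^2(\mathcal{X})$ of representatives for the cokernel of specialization, as $\mathfrak{p}$ varies.

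\textbf{Modular interpretation and Eichler--Shimura.} Since $\pi$ has at least one singular fibre, the $j$-invariant defines a non-constant map $j_E : \mathbb{P}^1_K \to \mathbb{P}^1_j$. Passing to a suitable quadratic cover to kill any twist and to a further cover trivializing a level-$N$ structure, $\mathcal{X}$ becomes related to a pullback of the universal elliptic curve over the Deligne--Rapoport integral model of $Y_1(N)$, and $\mathcal{X} \times_{\mathbb{P}^1_\mathcal{O}} \mathcal{X}$ inherits globally defined Hecke correspondences $\mathcal{T}_n \in \mathrm{CH}^2(\mathcal{X} \times_{\mathbb{P}^1_\mathcal{O}} \mathcal{X})$. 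Modulo each good $\mathfrak{p}$, the Eichler--Shimura relation
\[\mathcal{T}_p \equiv \Gamma_{\mathrm{Frob}_\mathfrak{p}} + \Gamma_{\mathrm{Frob}_\mathfrak{p}}^{t}\]
holds, with $\Gamma_{\mathrm{Frob}_\mathfrak{p}}$ the graph of relative Frobenius. By the function-field Tate conjecture for elliptic surfaces (Artin and Swinnerton-Dyer), every new Mordell--Weil class modulo $\mathfrak{p}$ is, up to torsion, a $\Gamma_{\mathrm{Frob}_\mathfrak{p}}$-transform of a globally defined section. Substituting Eichler--Shimura and pushing forward through a projection, such transforms become $\mathbb{Z}$-linear combinations of $\mathcal{T}_n$-transforms of globally defined sections plus contributions already accounted for, landing inside a finitely generated subgroup of $\mathrm{CH}^2(\mathcal{X})$.

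\textbf{Main obstacle.} The hardest technical step is making the modular comparison and Eichler--Shimura relation precise integrally over $\Spec(\mathcal{O})$: the cover $j_E$ and the quadratic twist must be chased carefully (and pulled back via Galois descent to the original $\mathcal{X}$), the integral model of $\mathcal{Y}_1(N)$ must behave well at all but finitely many primes (absorbed into shrinking $\Spec(\mathcal{O})$), and one must verify that Hecke-transforms of globally defined sections genuinely suffice to generate the cokernel of specialization uniformly in $\mathfrak{p}$. A secondary obstacle is controlling the torsion of $\Sigma(\mathcal{X})$, since finite generation (rather than finiteness) demands bounds on both rank and torsion.
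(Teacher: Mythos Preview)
Your Shioda--Tate reduction is fine and matches what the paper does. The gap is in the ``Modular interpretation and Eichler--Shimura'' step. The assertion that, by Artin--Swinnerton-Dyer, ``every new Mordell--Weil class modulo $\mathfrak{p}$ is, up to torsion, a $\Gamma_{\mathrm{Frob}_\mathfrak{p}}$-transform of a globally defined section'' is not what the function-field Tate conjecture says, and I do not see how to make it true. In Mildenhall's situation the new divisors on $(E \times E)_{\mathbb{F}_p}$ are \emph{literally} graphs of the Frobenius endomorphism of $E_{\mathbb{F}_p}$, and Eichler--Shimura relates those graphs to the globally defined Hecke correspondence $T_p$ on the surface itself. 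For a general elliptic surface $\pi \colon X \to \mathbb{P}^1$, a new section modulo $\mathfrak{p}$ is simply a new rational point of $E$ over $\mathbb{F}_\mathfrak{p}(t)$; it carries no intrinsic Frobenius structure, and applying the relative Frobenius correspondence in $X \times_{\mathbb{P}^1} X$ to a globally defined $\mathbb{F}_\mathfrak{p}$-rational section just returns that section. Concretely, if $E(K(t))$ has rank $0$ while $E(\mathbb{F}_\mathfrak{p}(t))$ has positive rank, there is nothing globally defined to transform. The Hecke correspondences you pull back along $j_E$ from $Y_1(N)$ act on the modular base, not on sections of $\pi$, so even after the $j$-map comparison they do not manufacture the needed cycles on $X$.

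The paper sidesteps modularity entirely. Given a new section $D_{\mathbb{F}_\mathfrak{p}}$ with $x$-coordinate $\bar f \in \mathbb{F}_\mathfrak{p}(t)$, one lifts only $\bar f$ to some $f \in R(t)$; then $P(f)$ is not a square in $K(t)$ (else the section would lift), so the double cover $\mathcal{C}(D_{\mathbb{F}_\mathfrak{p}}) \to \mathbb{P}^1_R$ defined by $g^2 = P(f)$ is geometrically irreducible over $K$ but splits over $\mathbb{F}_\mathfrak{p}$. On the base change $\mathcal{X}(D_{\mathbb{F}_\mathfrak{p}}) = \mathcal{X} \times_{\mathbb{P}^1_R} \mathcal{C}(D_{\mathbb{F}_\mathfrak{p}})$ the pair $(f,g)$ is an honest section over $K$ whose closure has special fibre $D_{\mathbb{F}_\mathfrak{p}}$ on one sheet, so its special fibre vanishes in $\mathrm{CH}_0$; pushing and pulling along the degree-$2$ map $\psi$ gives $2[D_{\mathbb{F}_\mathfrak{p}}] = 0$ in $\mathrm{CH}_0(\mathcal{X}/R)$. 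Combined with an $N$-torsion bound for fibre components not defined over $K$ and the finiteness of $\mathrm{CH}^2(\mathcal{X})[2N]$ from Colliot-Th\'el\`ene--Raskind, this yields the theorem without any Eichler--Shimura input.
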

This applies to many surfaces of interest, e.g., (some) K3 surfaces. Also note that $H^2(X, \Oo_X)$ can be arbitrarily large and that there are no restrictions on the number field $K$.
\begin{cor}
Let $X/K$ be as in the theorem above. Then $\mathrm{CH}^2(X)[n]$ is finite for every $n>0$ and the group $\mathrm{CH}^2(X)$ is finitely generated if and only if $\mathrm{CH}^2(\mathcal{X})$ is finitely generated.
\end{cor}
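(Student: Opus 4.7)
The plan is to unpack the localization sequence displayed in the introduction into the short exact sequence
\[
0 \to \Sigma(\calX) \to \mathrm{CH}^2(\calX) \to \mathrm{CH}^2(X) \to 0,
\]
and then read both statements of the corollary off it using Theorem~\ref{Main thm}. That theorem says that $\Sigma(\calX)$ is finitely generated, so the "if and only if" part is pure homological algebra: a quotient of a finitely generated abelian group is finitely generated, while any extension of two finitely generated abelian groups is finitely generated. Hence $\mathrm{CH}^2(\calX)$ is finitely generated if and only if $\mathrm{CH}^2(X)$ is.

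For the $n$-torsion assertion, I would apply the snake lemma to multiplication by $n$ on the displayed sequence, obtaining
\[
0 \to \Sigma(\calX)[n] \to \mathrm{CH}^2(\calX)[n] \to \mathrm{CH}^2(X)[n] \to \Sigma(\calX)/n\Sigma(\calX).
\]
Since $\Sigma(\calX)$ is finitely generated, both $\Sigma(\calX)[n]$ and $\Sigma(\calX)/n\Sigma(\calX)$ are finite. Finiteness of $\mathrm{CH}^2(X)[n]$ thus reduces to finiteness of $\mathrm{CH}^2(\calX)[n]$, which is a known fact for regular, proper, flat schemes over an open subscheme of $\Spec(\Oo_K)$; it can be extracted, for instance, from the Bloch--Ogus arguments of \cite{CT-R} combined with the now-established Merkurjev--Suslin / Voevodsky--Rost norm residue theorem.

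All the substance is already contained in Theorem~\ref{Main thm}; the corollary is a formal consequence of it together with standard finiteness statements for torsion in the second Chow group of an arithmetic threefold, so I do not foresee any real obstacle beyond citing those results correctly.
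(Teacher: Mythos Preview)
Your argument is correct and is exactly the one the paper has in mind: the corollary is deduced from Theorem~\ref{Main thm} together with the finiteness of $\mathrm{CH}^2(\mathcal{X})[n]$ from \cite{CT-R}[Theorem~1.1], via the short exact sequence $0 \to \Sigma(\mathcal{X}) \to \mathrm{CH}^2(\mathcal{X}) \to \mathrm{CH}^2(X) \to 0$. Your write-up simply spells out the snake-lemma step that the paper leaves implicit.
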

This follows from \cite{CT-R}[Theorem 1.1.], which says that $\mathrm{CH}^2(\mathcal{X})[n]$ is finite. To prove Theorem \ref{Main thm} we use the relation between the Néron--Severi groups and the Mordell--Weil groups of elliptic surfaces. Our idea is to realize any new section of $\pi_{\Ff_{\p}} \colon \mathcal{X}_{\Ff_{\p}} \rightarrow \Pp^1_{\Ff_{\p}}$ as (a component of) the degeneration of a $2$-section $D \subset X$ defined over $K$. This is made precise in Section \ref{final}. Once we do this, the result will follow almost immediately from the formal properties of Chow groups, specialization maps and the aforementioned finiteness results.
\section{Preliminaries}
\subsection{Elliptic fibrations}
We use the book \cite{MWL} as a reference on elliptic fibrations. We work with $\mathbb{P}^1_K$ as the base curve, where for the moment $K$ is an arbitrary field. Recall that a Weierstrass fibration $\pi' \colon X' \rightarrow \mathbb{P}^1_K$ is, by definition, a normal projective surface $X'$ together with a flat morphism $\pi'$ whose geometric fibres are either smooth curves of genus one or reduced plane cubic curves with a single node or a single cusp (i.e. nodal or cuspidal cubic curves). One also assumes that $\pi'$ admits a section whose image we regard as the zero section of the elliptic fibration.

To any Weierstrass fibration we can, by resolution of singularities, associate a minimal elliptic surface $\pi \colon X \rightarrow \mathbb{P}^1_K$. Thus $X$ is regular and the fibres of $\pi$ do not contain any $(-1)$-curves. The singular fibres of $\pi$ can be computed via the Tate algorithm, which we briefly explain. If $\Char(K) \neq 2,3$ we can write the Weierstrass fibration as
\[
y^2 = x^3 + a_4 x + a_6
\]
where $a_4, a_6 \in K(t)$. Now let $0, \infty \in \Pp^1(K)$ and let $U = \Pp^1 \setminus \{0\}$ and $U' = \Pp^1 \setminus \{\infty\}$, with $U = \Spec(K[t])$ and $U' = \Spec(K[t^{-1}])$. We can assume that $a_4, a_6 \in K[t]$ up to an admissible change of coordinates. Since $U$ has trivial class group, we can then find a global minimal Weierstrass equation over $U$. This means that either $v(a_4) < 4$ or $v(a_6) < 6$ for every valuation $v$ of $K(t)$ supported on $U$. 

Now let $d$ be the smallest integer such that $\deg(a_i) \leq di$ for $i = 4,6$. Given such a minimal Weierstrass equation over $U$, one can obtain the minimal Weierstrass form over $U'$ by seeing the $a_i$'s as sections of $\mathcal{O}(id)$. From now on we let
\[
P(x) = x^3 + a_4 x + a_6
\]
describe a minimal Weierstrass fibration over $U$ with $a_4, a_6 \in K[t]$. So if $t = t_0/t_1$ and we put
\[
a_i' = (t_1/t_0)^{id} \, a_i(t_0/t_1),
\]
we obtain two polynomials in $t^{-1}$, and the minimal Weierstrass equation over $U'$ is given by
\[
P'(x) = x^3 + a_4' x + a_6'.
\]
The Tate algorithm tells us that we can determine the singular fibres of $\pi$ over a closed point of $U$ (resp. $U'$) simply by looking at the values of $v(a_i)$, $v(\Delta)$ and $v(j)$ (resp. $v(a_i')$, $v(\Delta')$ and $v(j')$), where $\Delta = -16\bigl(4a_4^3 + 27 a_6^2\bigr)$ is the discriminant function, $j = -1728 \,\frac{(4a_4)^3}{\Delta}$ is the $j$-invariant and $v$ is the valuation corresponding to that closed point (see Tables 5.1 and 5.2 of \cite{MWL}).
\begin{defi}
Let $K$ be a number field and $\p \subset \Oo_{K}$ a prime. Let $P(x)$ describe a minimal Weierstrass equation. We say that $\p$ satisfies (*) if:
\begin{enumerate}
    \item the residue characteristic of $\p$ is not $2$ or $3$;
    \item all the rational functions $a_4, a_6, \Delta, j \in K(t)$ are integral at $\p$, and their reduction modulo $\p$ maintains the same factorization type (and the same holds for the analogous functions on the other chart). 
\end{enumerate}
\end{defi}
In other words, point (2) means the following. Let $v_{\p} \colon K(t) \rightarrow \Z$ be the valuation corresponding to the prime $\p \subset \Oo_K$ and let $R(t)$ be the ring of integers of $v_{\p}$. For $f \in R(t)$ we decompose
\[
f(t) = a \prod_i f_i^{\alpha_i}
\]
with $a\in R$ and $f_i \in R[t]$ irreducible and pairwise coprime and $v_{\p}(f_i) = 0$ for every $i$. We then say that $f$ maintains the same factorization type modulo $\p$ if the reduction $\bar{f}_i \in \Ff_{\p}[t]$ is separable, of the same degree as $f_i$, and if moreover $\bar{f}_i$ and $\bar{f}_j$ are coprime whenever $i \neq j$. 

Then all but finitely many primes satisfy (*), and the Tate algorithm implies that we can resolve the singularities of the Weierstrass model $\mathcal{X}'/R$ simultaneously over $R = \Oo_{\p}$. In particular, $X/K$ has good reduction at $\p$, and all its reductions in positive characteristic maintain the same configuration of singular fibres. Thus we have in this case a smooth projective model $\mathcal{X} / \Spec(\Oo)$, where $\Oo$ is the localization of $\Oo_{K}$ at the finitely many primes which do not satisfy (*), and an elliptic fibration $\tilde{\pi} \colon \mathcal{X} \rightarrow \mathbb{P}^1_{\Oo}$. For any $\p \subset \Oo$ we denote by $\pi_{\Ff_{\p}} \colon X_{\Ff_{\p}} \rightarrow \Pp^1_{\Ff_{\p}}$ the induced elliptic fibration over the residue field of $\p$.

\subsection{Specialization map on N\'{e}ron-Severi lattices}
For a prime $\p$ satisfying (*) we consider the specialization map
\[
\sigma_{\p} \colon \NS(X) \longrightarrow \NS(X_{\Ff_{\p}}).
\]
Since $X$ is fibred over $\Pp^1_K$ we have $\Pic^0(X) = 0$ and $H^1(X, \Oo_X) = 0$ as well (and the same is true over any field, see \cite{MWL}[Section 5.3]). Moreover, since $X$ has a rational point (because $\pi$ has a section), we also have, by standard arguments, that
\[
\Pic(X)= \Pic(\overline{X})^{G_K},
\]
where $G_K$ is the Galois group of $K$. Hence $\NS(X) = \NS(\overline{X})^{G_K}$. We also recall that the Néron--Severi group of an elliptic surface is always torsion-free (see \cite{MWL}[Theorem 6.4]). 

\begin{prop}
Assume that $e \leq p-1$, where $p$ is the residue characteristic of $\p$ and $e$ is the ramification index of $\p$ over $\Q$. Then $\sigma_{\p}$ is injective and has torsion-free cokernel. 
\end{prop}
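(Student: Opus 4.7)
\emph{Proof plan.} My plan is to prove both assertions simultaneously by comparing $\sigma_{\p}$ with specialization in $\ell$-adic cohomology via the cycle class map and exploiting saturation. Fix a prime $\ell$ and set $M_{\ell}:=H^2_{\et}(X_{\overline{K}},\Zl(1))$. For an elliptic surface with $H^1(X,\Oo_X)=0$, $M_{\ell}$ is torsion-free whenever $\ell$ differs from the residue characteristic $p$ of $\p$, and the Kummer exact sequence yields
\[
0\to \NS(X_{\overline{K}})\otimes\Zl\to M_{\ell}\to \varprojlim_{n}\Br(X_{\overline{K}})[\ell^n]\to 0
\]
with torsion-free cokernel. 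Hence $\NS(X_{\overline{K}})\otimes\Zl$ is a saturated submodule of $M_{\ell}$. Taking $G_K$-invariants and using the rational point of $X$ to identify $\NS(X)$ with $\NS(X_{\overline{K}})^{G_K}$ gives that $\NS(X)\otimes\Zl$ is saturated in $M_{\ell}^{G_K}$, and $M_{\ell}^{G_K}$ is in turn saturated in $M_{\ell}$ (using torsion-freeness of $M_{\ell}$).

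For $\ell\neq p$, I would next invoke smooth--proper base change for $\mathcal{X}/\mathcal{O}_{\p}$ to identify $M_{\ell}$ with $H^2_{\et}(X_{\overline{\Ff_{\p}}},\Zl(1))$ as a $G_{K_{\p}}$-module, inertia acting trivially. The same Kummer argument applied on $X_{\overline{\Ff_{\p}}}$ realizes $\NS(X_{\Ff_{\p}})\otimes\Zl$ as another saturated submodule of $M_{\ell}$, and $\sigma_{\p}\otimes\Zl$ corresponds under these identifications to the inclusion $\NS(X)\otimes\Zl\hookrightarrow \NS(X_{\Ff_{\p}})\otimes\Zl$ inside $M_{\ell}$. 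Injectivity at $\ell$ is then immediate, while the cokernel is torsion-free by transitivity of saturation: $\NS(X)\otimes\Zl$ is saturated in the ambient $M_{\ell}$ and sits inside $\NS(X_{\Ff_{\p}})\otimes\Zl\subset M_{\ell}$, so it is automatically saturated in the middle term. This settles every prime $\ell\neq p$.

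The case $\ell=p$ is the main obstacle, and is where the hypothesis $e\leq p-1$ enters. Smooth--proper base change is unavailable, so the plan is to replace $M_{\ell}$ by an integral $p$-adic cohomology that compares $H^2_{\et}(X_{\overline{K_{\p}}},\Z_{p}(1))$ with the crystalline cohomology $H^2_{\mathrm{crys}}(X_{\Ff_{\p}}/W(\Ff_{\p}))$. The assumption $e\leq p-1$ is exactly the Fontaine--Laffaille bound needed to obtain a $G_{K_{\p}}$- and Frobenius-equivariant integral comparison in the Hodge--Tate weight range appearing in $H^2(1)$, together with compatibility of the two cycle-class maps. Once such a comparison is in place, the saturation argument above carries over without modification. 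The hard part will therefore be setting up this integral $p$-adic comparison and verifying that cycle classes match under it; the rest of the argument is a formal manipulation of Galois invariants and saturations.
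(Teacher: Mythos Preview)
The paper's own proof is a two-line citation: it invokes Raynaud's specialization theorem for the Picard functor (and Maulik--Poonen) to get torsion-free cokernel for $\Pic(X_{\overline K})\to\Pic(X_{\overline{\Ff_{\p}}})$, and then passes to Galois invariants using primitivity of $\NS(X)=\NS(X_{\overline K})^{G_K}$ in $\NS(X_{\overline K})$. You are instead sketching a direct proof of that cited result via saturation of cycle classes in cohomology.

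Your $\ell\neq p$ argument is correct and standard. The problem is at $\ell=p$, where you assert that $e\leq p-1$ is ``exactly the Fontaine--Laffaille bound'' for an integral comparison on $H^2$. That is not accurate: classical Fontaine--Laffaille theory requires $K_{\p}/\Q_p$ \emph{unramified} (so $e=1$) with Hodge--Tate weights in $[0,p-2]$; for $H^2$ the filtration has length~$2$, and the known ramified generalizations (Breuil modules, etc.) impose bounds of the shape $e\cdot(\text{length})<p-1$, which is still strictly stronger than $e\leq p-1$. The bound $e\leq p-1$ in Raynaud and Maulik--Poonen does not come from comparing all of $H^2$ integrally; it comes from a more targeted argument on line bundles themselves (deformation theory with obstruction space $H^2(X,\Oo_X)$, together with Raynaud's results on prolongations of finite flat group schemes, where only weights $\{0,1\}$ intervene). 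So your route through an integral $p$-adic comparison on the full $H^2$ would, if carried out, establish only a weaker statement---essentially $e=1$ and $p\geq 5$. That would in fact suffice for the application in this paper, since one only needs the conclusion for all but finitely many $\p$; but it would not prove the proposition as stated, and the identification of $e\leq p-1$ with a Fontaine--Laffaille range should be dropped.
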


\begin{proof}
By \cite{MR563468}[Theorem 4.2.1.2] (see also \cite{MR2948470}[Theorem 1.4]) the specialization map
\[
\Pic(X_{\bar{K}}) \longrightarrow \Pic(X_{\overline{\Ff_{\p}}})
\]
has torsion-free cokernel under the assumption. The result then follows because $\NS(X) = \NS(X_{\overline{K}})^{G_K}$ and similarly for $\NS(X_{\Ff_{\p}})$, and these are primitive subgroups of the corresponding Picard groups.  
\end{proof}

We say that a prime satisfies (**) if it satisfies (*) and, moreover, $e \leq p-1$. Thus again all but finitely many primes satisfy (**), and, up to further localizing $\Oo$, we can assume that $\sigma_{\p}$ has torsion-free cokernel for every $\p \subset \Oo$. Let now $\mathrm{Triv}(X) \subset \NS(X)$ be the primitive sublattice generated by the zero section and the components of the singular fibres. Then
\[
\NS(X_{\bar{K}})/ \mathrm{Triv}(X_{\bar{K}}) \cong \mathrm{MW}((X_{\bar{K}})_{\eta}),
\]
the Mordell--Weil group of $X$, viewed as an elliptic curve over the generic point $\eta$ of $\Pp^1_{\bar{K}}$. Under our assumption on $\Oo$ we also have
\[
\sigma_{\p} \colon \mathrm{Triv}(X_{\bar{K}}) \xrightarrow{\sim} \mathrm{Triv}({X_{\bar{\Ff_{\p}}}})
\]
for every prime $\p$ of $\mathcal{O}$. 
\begin{rmk}
Note that it may happen that some components of the singular fibres of $\pi$ are not defined over $K$ while they are defined over $\Ff_{\p}$. On the other hand, for a general $\pi$ the singular fibres are irreducible and this does not occur. 
\end{rmk}
\section{Double covers attached to new sections} \label{final}
Let now $R$ be any localization of $\Oo$ at to some prime $\p$ satisfying (**). We denote by $\mathcal{X}/R$ the model of $X$ over $R$ as in the previous section. We have a natural map $\mathcal{X} \rightarrow \mathcal{X}'$ (which is a sequence of blow-downs over $R$), where $\mathcal{X}'$ is the Weierstrass model associated to a minimal equation as before. Let $\p \subset R$ be the maximal ideal of $R$ and assume that $\pi_{\Ff_{\p}} \colon X_{\Ff_{\p}} \rightarrow \mathbb{P}^1_{\Ff_{\p}}$ acquires a new section $s_{\Ff_{\p}} \colon \Pp^1_{\Ff_{\p}} \rightarrow X_{\Ff_{\p}}$. Let $D_{\Ff_{\p}} \subset X_{\Ff_{\p}}$ be its associated Cartier divisor. Let now $\mathcal{U} = \Spec(\Oo[t]) \subset \Pp^1_{\Oo}$ and consider the open subset
\[
\Spec\bigl(\Ff_{\p}[t,x,y]/(y^2 - x^3 -\bar{a}_4 x - \bar{a}_6)\bigr)
\]
of $X_{\Ff_{\p}}'$ which maps to $U_{\Ff_{\p}} = \Spec(\Ff_{\p}[t])$ via $\pi_{\Ff_{\p}}$ (on the algebra side the bar denotes reduction modulo $\p$). The point at infinity then corresponds to the zero section, hence $D_{\Ff_{\p}}$ must intersect $\Spec(\Ff_{\p}[t,x,y]/(y^2 - x^3 -\bar{a}_4 x - \bar{a}_6))$ nontrivially. Thus we can represent the section $s_{\Ff_{\p}}$ over $U_{\Ff_{\p}}$ by some rational function $\bar{f} \in \Ff_{\p}(t)$ such that
\[
\bar{P}(\bar{f}) = \bar{f}^3 + \bar{a}_4 \bar{f} + \bar{a}_6 \in \Ff_{\p}(t)^2.
\]
Write $\bar{g}^2 = \bar{P}(\bar{a})$ for some $\bar{g} \in \Ff_{\p}(t)$, so that $s_{\Ff_{\p}}$ is given in the $(x,y)$-coordinates by $(\bar{f}, \bar{g})$ and its negation $-s_p$ by $(\bar{f}, -\bar{g})$.

Consider now the function field $K(t)$ and the valuation $v_{\p} \colon K(t) \rightarrow \Z$. Then we can write
\[
R(t) \coloneqq \bigg\{ \frac{f}{g} \;\bigg|\; f,g \in R[t],\ v_{\p}(g) = 0 \bigg\}.
\]
Pick any $f \in R(t)$ which lifts $\bar{f}$ and consider $G \coloneqq P(f)$. Note that $G \in R(t)$ as well, and that $G$ is a lift of $\bar{g}^2$. On the other hand, $G$ cannot itself be a square, for otherwise $(f,g)$ would describe a section of $X$ lifting $D_{\Ff_{\p}}$, where $g^2 = G$, which is impossible under our assumption. 

So it makes sense to consider the field extension $L = K(t)[g]/(g^2 - G)$ and define $\mathcal{C}(D_{\Ff_{\p}})$ as the integral closure of $\Pp^1_R$ in $L$. Note that this depends on the choice of the lift $f$, and not only on $D_{\Ff_{\p}}$, but this is not relevant for us. Since integral closure commutes with localization, the generic fibre of $\mathcal{C}(D_{\Ff_{\p}})$ is the integral closure of $\Pp^1_K$ in $L$. Also, note that $L$ may split over $\bar{K}$, e.g.\ if $g$ becomes a square over an extension of $K$. This can happen only if $\NS(X_{\bar{K}})^{G_K} \neq \NS(X_{\bar{K}})$ and $D_{\Ff_{\p}}$ does not lift over $K$ but lifts over an extension of $K$. Moreover, it is also necessary that the lift $f$ is chosen such that $P(f)$ is a geometric square. So in order to have $\Spec(L)$ geometrically irreducible in general we need only to show that $P(f)$ can always be chosen so that it is not a square in $\bar{K}(t).$
\begin{lemma}
There are infinitely many lifts $f$ of $\bar{f}$ such that $P(f)$ is not a square in $\overline{K}(t)$.
\end{lemma}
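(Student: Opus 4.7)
The strategy is to restrict to the one-parameter family of lifts $f_c \coloneqq f_0 + \pi c$, where $f_0 \in R(t)$ is any fixed lift of $\bar f$, $\pi$ is a uniformizer of $R$, and $c$ runs through $R$. Since $R$ is infinite, it suffices to show that $P(f_c)$ is a square in $\overline{K}(t)$ for only finitely many $c \in R$; the remaining $c$ then yield the required infinite family of lifts.

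The key observation is that $P(f_0 + \pi T) \in K(t)[T]$ is a cubic polynomial in $T$ with leading coefficient $\pi^3 \in K^\times$. Writing $f_0 = n_0/d_0$ with $n_0, d_0 \in R[t]$, multiplying by the square $d_0^4$ clears all $t$-denominators and produces a polynomial $N(T,t) \in R[T,t]$ of $T$-degree exactly $3$ such that, for each $c \in \overline{K}$, $P(f_0 + \pi c)$ is a square in $\overline{K}(t)$ if and only if $N(c,t)$ is a square in $\overline{K}[t]$. The crucial structural point is that, because $N$ has odd degree in $T$, $N$ itself is not a square in $\overline{K}[T,t]$.

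It then remains to invoke the standard principle that non-squareness is preserved under generic specialization: if $N \in \overline{K}[T,t]$ is not a square, then $N(c,t) \in \overline{K}[t]$ is a square for only finitely many $c \in \overline{K}$. Concretely, factor $N = c_0 \prod_i N_i^{e_i}$ into distinct irreducibles in $\overline{K}[T,t]$; some $e_j$ must be odd. For $c$ outside the finite set where some $N_i(c,t)$ acquires a repeated root in $t$ or where two distinct $N_i(c,t), N_k(c,t)$ share a factor in $t$, the expression $\prod_i N_i(c,t)^{e_i}$ is --- up to a constant --- the prime factorization of $N(c,t)$ in $\overline{K}[t]$, and the odd $e_j$ then prevents $N(c,t)$ from being a square. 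Equivalently, the affine surface $\{y^2 = N(T,t)\} \subset \mathbb{A}^3$ is geometrically integral (since $N$ is a non-square), so its projection to $\mathbb{A}^1_T$ has geometrically integral generic fiber and integral fibers outside a finite subset of $\mathbb{A}^1_T(\overline{K})$. This final specialization step is the only nontrivial input, but is completely standard; if one wants a fully effective statement one can write down an explicit resultant in $T$ of the $N_i$ and their $t$-derivatives whose nonvanishing at $c$ already forces $N(c,t)$ to be a non-square.
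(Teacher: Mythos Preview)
Your argument has a genuine gap at the specialization step. The principle you invoke --- ``if $N\in\overline{K}[T,t]$ is not a square, then $N(c,t)$ is a square for only finitely many $c$'' --- is false as stated. Take $N=T$ (or more generally $N=c(T)\,M(T,t)^2$ with $c\in\overline{K}[T]$ nonconstant): then $N$ is a non-square in $\overline{K}[T,t]$, yet $N(c,t)$ is a nonzero constant, hence a square in $\overline{K}[t]$, for every $c\neq 0$. In your factorization argument this manifests as follows: the odd-exponent irreducible factor $N_j$ guaranteed by ``$\deg_T N=3$'' may lie in $\overline{K}[T]$, in which case $N_j(c,t)\in\overline{K}^\times$ is a unit and contributes nothing to the prime factorization of $N(c,t)$. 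Equally, in your geometric version, geometric integrality of the surface $\{y^2=N\}$ does \emph{not} imply geometric integrality of the generic fibre over $\mathbb{A}^1_T$: for $N=T$ the surface is integral but every closed fibre over $T=c$ is the reducible curve $y^2=c$.

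What you actually need is that $N$ is not a square in $\overline{K(T)}[t]$, i.e.\ not of the form $c(T)\cdot M(T,t)^2$. This does hold here, but it requires an extra argument you have not given. One route: if $N=c(T)M^2$ then any root $\beta\in\overline{K}$ of $c$ gives $P(f_0+\pi\beta)=0$ identically in $t$, so $f_0+\pi\beta$ is the $x$-coordinate of a $2$-torsion point of $E$ over $\overline{K}(t)$; one must then rule this out (and also the degenerate case $\deg_T c=3$, which forces $a_4,a_6$ constant, contradicting the existence of a singular fibre). Another route: a square $P(f_0+\pi T)$ in $\overline{K(T)}(t)$ would yield a section of $E$ over $\overline{K(T)}(t)$ with $x$-coordinate $f_0+\pi T\notin\overline{K}(t)$, contradicting $E(\overline{K(T)}(t))=E(\overline{K}(t))$ for non-isotrivial elliptic surfaces. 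Either way, the hypothesis ``at least one singular fibre'' enters, and this is real work your write-up skips.

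By contrast, the paper argues differently: it clears denominators with $f_1^3$ rather than a square, reduces to showing $J_\lambda\in K[t]$ is \emph{squarefree} (not merely a non-square), observes that $Q(x,t)=x^3+a_4 f_1^2 x+a_6 f_1^3$ is irreducible in $K[x,t]$ because $P(x,t)$ is, and then applies Hilbert irreducibility to $Q(f_0+\pi x,t)$ to produce infinitely many $\lambda\in K$ with $J_\lambda$ irreducible. Your approach, once repaired, avoids Hilbert irreducibility in favour of a direct specialization argument, which is a legitimate alternative; but as written the key step does not go through.
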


\begin{proof}
Pick any lift $f = f_0/f_1$ of $\bar{f} = \bar{f}_{0}/ \bar{f}_{1}$, where
$\bar{f}_{0}, \bar{f}_{1} \in \Ff_{\p}[t]$ are coprime and
$f_0,f_1 \in R[t]$ have the same degrees as $\bar{f}_{0}$ and $\bar{f}_{1}$
respectively. Then $f_0$ and $f_1$ are also coprime and
$v_{\p}(f_0) = v_{\p}(f_1) = 0$. For a local parameter $\pi$ of $\p$ we
consider lifts of the form
\[
f_\lambda = \frac{f_0 + \pi \lambda}{f_1}, \qquad \lambda \in R.
\]
Then
\[
f_1^3\,P\bigl((f_0 + \pi \lambda)/f_1\bigr)
= (f_0 + \pi \lambda)^3 + a_4 f_1^2 (f_0+\pi \lambda)+ a_6 f_1^3
\eqqcolon J_{\lambda},
\]
and the right-hand side is coprime to $f_1$ since $f_0$ and $f_1$ are coprime
modulo $\p$. Thus it is enough to show that there are infinitely many
choices of $\lambda$ such that $J_{\lambda}$ is squarefree in $\overline{K}[t]$.
If now, for a ring $S$, we denote by $S[t]_{N} \cong S^{N+1}$ the
$S$-module of polynomials of degree $\leq N$, then the subset of
squarefree polynomials in $S[t]_{N}$ is a Zariski open subset defined
over $\Z$. Hence it is enough to show that $J_{\lambda}$ is squarefree over $K[t]$
for infinitely many $\lambda$. Now note that
\[
Q(x,t) = x^3 + a_4 f_1^2 x+ a_6 f_1^3
\]
satisfies $Q(f_1 x , t) = f_1^3 P(x,t)$ and that $P(x,t)$ is irreducible
in $K[x,t]$. If, for a contradiction, we had $Q(x,t) = Q_0(x,t) Q_1(x,t)$,
then
\[
Q_0(f_1x,t)\, Q_1(f_1x,t) = f_1^3 P(x,t),
\]
which would imply, by degree considerations in $x$, that either $Q_0$ or
$Q_1$ is constant in $x$, which is impossible unless the other factor
is a unit. Thus $Q(x,t)$ is irreducible, and hence also
$Q(f_0 + \pi x,t)$ must be irreducible. Finally, by Hilbert
irreducibility, we have infinitely many $x_0 \in K$ such that
$Q(f_0 + \pi x_0,t)$ remains irreducible over $K$, and in particular, squarefree.
\end{proof}
So we can choose $L$ such that $\Spec(L)$ is geoemtrically irreducible and the generic fiber of $\mathcal{C}(D_{\Ff_{\p}})$ is a smooth projective hyperelliptic curve.
\begin{prop}
The induced map $\phi \colon \mathcal{C}(D_{\Ff_{\p}}) \rightarrow \Pp^1_{R}$ is finite (of degree $2$) and flat. The special fibre of $\mathcal{C}(D_{\Ff_{\p}})$ splits into two components $(\Pp^1_{\Ff_{\p}})^{+} \cup (\Pp^1_{\Ff_{\p}})^{-}$ meeting at the zeroes and poles of $\bar{g}$. The two components are swapped by the reduction modulo $\p$ of the hyperelliptic involution $\iota \colon \mathcal{C}(D_{\Ff_{\p}}) \xrightarrow{\sim} \mathcal{C}(D_{\Ff_{\p}})$.
\end{prop}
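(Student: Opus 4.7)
The plan is to treat the four assertions (finiteness, flatness, structure of the special fibre, action of the involution) in sequence, using only standard commutative algebra once the setup is fixed.

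To show that $\phi$ is finite of degree $2$: $\Pp^1_R$ is of finite type over the excellent Dedekind domain $R$, hence itself excellent and noetherian normal, and the integral closure of such a scheme in a finite separable field extension of its function field is a finite morphism. The degree is $[L : K(t)] = 2$ since, by the preceding lemma, $G = P(f)$ is not a square even in $\overline{K}(t)$. For flatness, $\mathcal{C}(D_{\Ff_\p})$ is normal by construction and $2$-dimensional, hence Cohen--Macaulay ($S_2$ equals CM in dimension $\leq 2$). Since $\Pp^1_R$ is regular of dimension $2$ and $\phi$ has $0$-dimensional fibres, miracle flatness applies and gives that $\phi$ is flat.

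For the structure of the special fibre, I would analyze $\mathcal{C}(D_{\Ff_\p})$ at the generic point $\eta$ of $\Pp^1_{\Ff_\p}$. The local ring $\mathcal{O}_{\Pp^1_R, \eta}$ is the DVR $R(t)$ with uniformizer $\pi$ and residue field $\Ff_\p(t)$, and $G \in R(t)$ is a unit since $\bar{G} = \bar{g}^2 \neq 0$ in $\Ff_\p(t)$. Fix any lift $\tilde{g} \in R(t)$ of $\bar{g}$ and write $G - \tilde{g}^2 = \pi h$ with $h \in R(t)$. In $R(t)[g]/(g^2 - G)$ one then has $(g - \tilde{g})(g + \tilde{g}) = \pi h$, and at each of the two maximal ideals $(\pi,\, g \mp \tilde{g})$ the complementary factor $g \pm \tilde{g}$ is a unit; consequently the maximal ideal is principal, generated by $\pi$, so each localisation is a DVR unramified over $R(t)$ with residue field $\Ff_\p(t)$. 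Hence $R(t)[g]/(g^2 - G)$ is already normal, semi-local with two maximal ideals, and equals the base change of $\mathcal{C}(D_{\Ff_\p})$ to $\Spec R(t)$. It follows that $\mathcal{C}(D_{\Ff_\p})_{\Ff_\p}$ has exactly two irreducible components, each birational to $\Pp^1_{\Ff_\p}$, and that the special fibre is reduced (being a Cartier divisor on the CM scheme $\mathcal{C}(D_{\Ff_\p})$, so $S_1$, and generically reduced by the local analysis). Each reduced irreducible component is therefore a finite birational cover of the normal scheme $\Pp^1_{\Ff_\p}$, and such a cover must be an isomorphism.

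For the intersection pattern and the involution: two points of $\mathcal{C}(D_{\Ff_\p})_{\Ff_\p}$ above a closed point $P \in \Pp^1_{\Ff_\p}$ collapse to one (so both components pass through $P$) precisely when $(g - \bar{g})(g + \bar{g})$ acquires a double root modulo $P$, i.e.\ when $\bar{g}(P) = 0$; the symmetric argument on the affine chart at infinity handles the poles of $\bar{g}$. Finally, the hyperelliptic involution $\iota \colon g \mapsto -g$ evidently exchanges the ideals $(g - \tilde{g})$ and $(g + \tilde{g})$ on $R(t)[g]/(g^2-G)$, hence swaps the two components of the special fibre. The main technical obstacle is the local analysis at $\eta$: one must simultaneously obtain reducedness of the special fibre and its splitting into two components mapping isomorphically onto $\Pp^1_{\Ff_\p}$. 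Once that is in place, everything else follows formally.
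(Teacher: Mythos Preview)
Your argument is correct and follows the same approach as the paper: finiteness from normalization in a finite extension, flatness via miracle flatness (normal surface is Cohen--Macaulay), and the splitting of the special fibre from $G$ becoming a square modulo~$\p$. The paper's proof is terser, simply citing Liu's book for the special-fibre analysis and the compatibility of the $\iota$-quotient with reduction, whereas you supply the local computation at the generic point of $\Pp^1_{\Ff_\p}$ directly; both routes yield the same conclusions.
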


\begin{proof}
The map is finite of degree $2$ by construction. It is also flat by miracle flatness, since $\mathcal{C}(D_{\Ff_{\p}})$ is normal and of dimension two, hence automatically Cohen–Macaulay (see the beginning of \cite{MR1705977}[Section 1.2], where the same construction is considered). The claim about the special fibre follows from the definition of $L$, since $G$ becomes a square modulo $\p$. Note also that taking the quotient by $\iota$ commutes with taking the special fibre (see \cite{MR1705977}[Remark 1.7]) since the residue characteristic is odd.
\end{proof}Next, define $\mathcal{X}(D_{\Ff_{\p}})$ as the base change
\[
\begin{tikzcd}
	{\mathcal{X}(D_{\Ff_{\p}})} & {\mathcal{X}} \\
	{\mathcal{C}(D_{\Ff_{\p}})} & {\Pp^1_R}
	\arrow["\psi", from=1-1, to=1-2]
	\arrow["\sigma", from=1-1, to=2-1]
	\arrow["\pi"', from=1-2, to=2-2]
	\arrow["\phi", from=2-1, to=2-2]
\end{tikzcd}
\]
So we have a natural finite flat map $\psi \colon \mathcal{X}(D_{\Ff_{\p}}) \rightarrow \mathcal{X}$ induced by $\psi$ and the left vertical map induces an elliptic fibration $\sigma \colon \mathcal{X}(D_{\Ff_{\p}}) \rightarrow \mathcal{C}(D_{\Ff_{\p}})$. Note that this is not necessarily a minimal elliptic fibration (in particular its generic fibre may be singular), but it is birational to one. In any case, by construction, we have a new section of $\sigma$ described in the $(x,y)$-coordinates by $(f,g)$. 

Note that both $f$ and $g$ are $\p$-integral in $L$ (since $f \in R(t)$ by construction and $g$ is a solution of the degree-$2$ equation $y^2 = G$ with $G \in R(t)$). Thus, its image yields a prime divisor $\mathcal{D}^{+} \subset \mathcal{X}(D_{\Ff_{\p}})$, which is the Zariski closure of the section $D^{+} \subset X(D_{\Ff_{\p}})$ induced by $(f,g)$ on the generic fibre. The negation $(f,-g)$ induces another prime divisor $\mathcal{D}^{-} = \iota(\mathcal{D}^{+})$. Finally, up to renaming, we can also assume that the special fibre of $\mathcal{D}^{+}$ coincides with the section $D_{\Ff_{\p}} \subset X_{\Ff_{\p}}$, seen as a divisor on the component $(X_{\Ff_{\p}})^{+} \cong X_{\Ff_{\p}}$ of $\mathcal{X}(D_{\Ff_{\p}})_{\Ff_{\p}}$, and similarly for $\mathcal{D}^{-}$.

\subsection{Chow groups and Proof of Theorem \ref{Main thm}}
Chow groups of schemes over Dedekind domains and their functoriality are studied in Fulton's book \cite{MR1644323}[Chapter 20]. We recall the main results that we are going to use (keeping the notation from the previous section). Let $\mathcal{X}$ be a separated scheme of finite type over $\mathcal{O}$ (or $R$) with generic fibre $X$ and special fibres $X_{\Ff_{\p}}$ (both may be empty in Fulton's setting). If $Z \subset \mathcal{X}$ is a closed integral subscheme we let
\[
\dim_{\Oo}(Z) =
\begin{cases}
    \dim(Z_{K}) & \text{if } Z \text{ is flat over } \Oo, \\[4pt]
    \dim(Z) - 1 & \text{if } Z \text{ is vertical}.
\end{cases}
\]
The group $\mathrm{CH}_i(\mathcal{X} / \Oo)$ is then defined as the free abelian group generated by integral closed subschemes of relative dimension $i$ modulo rational equivalence (thus $\mathrm{CH}^2(\mathcal{X}) = \mathrm{CH}_0(\mathcal{X}/ \Oo)$ in this notation). The specialization map
\[
\sigma_p \colon \mathrm{CH}_i(X) \rightarrow \mathrm{CH}_i(X_{\Ff_{\p}})
\]
is defined in \cite{MR1644323}[Section 20.3] in full generality. Moreover, if $Z \subset X$ is a prime cycle and $\mathcal{Z} \subset \mathcal{X}$ denotes its Zariski closure, which is flat over $\Oo$, then
\[
\sigma_p(Z) = [\mathcal{Z}_{\Ff_{\p}}] \in \mathrm{CH}_i(X_{\Ff_{\p}}),
\]
i.e. $\sigma_p(Z)$ is the element of $\mathrm{CH}_i(X_{\Ff_{\p}})$ associated to the special fibre of $\mathcal{Z}$, seen as a closed subscheme of $X_{\Ff_{\p}}$. The following lemma is well-known:

\begin{lemma} \label{lemma zero chow}
Let $j \colon \mathcal{Z} \hookrightarrow \mathcal{X}$ be the inclusion of an integral closed subscheme. Assume that $\mathcal{Z}$ is flat over $\Oo$ and of relative dimension $i$. Then the cycle associated to the special fibre $\mathcal{Z}_{\Ff_{\p}}$ is zero in $\mathrm{CH}_{i-1}(\mathcal{X}/ \Oo)$.
\end{lemma}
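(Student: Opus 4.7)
The plan is to exhibit $[\mathcal{Z}_{\Ff_{\p}}]$ as the divisor of a rational function on $\mathcal{Z}$, so that it vanishes in $\mathrm{CH}_{i-1}(\mathcal{Z}/\Oo)$ and then, by pushing forward along $j$, in $\mathrm{CH}_{i-1}(\mathcal{X}/\Oo)$. Since the constructions of Section~\ref{final} already take place after localizing $\Oo$ at $\p$, I would work over the DVR $R = \Oo_{\p}$ from the outset and choose a uniformizer $\pi$ of its maximal ideal.

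Viewed inside $K \subset k(\mathcal{Z})$, $\pi$ is a nonzero rational function on $\mathcal{Z}$, and the plan is to compute $\operatorname{div}_{\mathcal{Z}}(\pi) = \sum_{V} \operatorname{ord}_{V}(\pi)\,[V]$ by examining each codimension-one point $V$ separately. If $V$ is horizontal, the prime of $V$ contracts to $(0) \subset R$, so $\pi$ is a unit in $\mathcal{O}_{\mathcal{Z}, V}$ and $\operatorname{ord}_V(\pi) = 0$. If $V$ is vertical, it must be a component of the unique special fibre $\mathcal{Z}_{\Ff_{\p}}$, and flatness of $\mathcal{Z}/R$ implies that $\pi$ locally generates the ideal of the scheme-theoretic fibre, so $\operatorname{ord}_V(\pi)$ coincides with the multiplicity of $V$ in $\mathcal{Z}_{\Ff_{\p}}$. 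Summing, $\operatorname{div}_{\mathcal{Z}}(\pi) = [\mathcal{Z}_{\Ff_{\p}}]$ on the nose as Weil divisors, so this cycle is zero in $\mathrm{CH}_{i-1}(\mathcal{Z}/R)$, and applying $j_*$ yields the desired vanishing.

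The only nontrivial identification is on the vertical side, where one needs $\operatorname{ord}_V(\pi) = \ell_{\mathcal{O}_{\mathcal{Z},V}}(\mathcal{O}_{\mathcal{Z},V}/\pi) = \text{mult}_V(\mathcal{Z}_{\Ff_{\p}})$; this is immediate from flatness plus the definition of the scheme-theoretic fibre. One mild caveat: if one insists on working directly over the global Dedekind base $\Oo$, then no $\pi \in \Oo$ with $v_{\p}(\pi) = 1$ need satisfy $v_{\q}(\pi) = 0$ for all $\q \neq \p$ unless $[\p]$ is trivial in $\mathrm{Cl}(\Oo)$; however, the application in Section~\ref{final} only requires the DVR version of the lemma, which is free of any such obstruction, so this is the case I would prove.
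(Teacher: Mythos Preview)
Your argument is correct and matches the paper's: realize $[\mathcal{Z}_{\Ff_{\p}}]$ as the divisor of a uniformizer $\pi$ restricted to $\mathcal{Z}$, then push forward along the closed immersion $j$. The paper addresses the class-group caveat you raise by further localizing $\Oo$ to an $\Oo'$ on which a chosen $\pi \in \Oo$ has $\p$ as its only zero and then invoking functoriality to return to $\Oo$; your decision to pass directly to the DVR $R=\Oo_{\p}$ is essentially the same manoeuvre, and as you observe it is all that the application in Section~\ref{final} actually needs.
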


\begin{proof}
Let $\pi \in \Oo$ be a local parameter for $\p$ and, if necessary, localize $\Oo$ further to obtain $\Oo'$ so that $\pi$ has only $\p$ as a simple zero on $\Spec(\Oo')$. Let now $\pi_{| \mathcal{Z}}$ be considered as a rational function on $\mathcal{Z}$. Then by the definition of rational equivalence
\[
\mathrm{div}(\pi_{| \mathcal{Z}}) =  [\mathcal{Z}_{\Ff_{\p}}] = 0
\]
in the Chow group $\mathrm{CH}_{i-1}(\mathcal{Z}/ \Oo')$. Hence, by the fact that proper pushforward respects rational equivalence, we get that $[\mathcal{Z}_{\Ff_{\p}}] = 0$ also in $\mathrm{CH}_{i-1}(\mathcal{X}/ \Oo')$. Finally, this is also zero in $\mathrm{CH}_{i-1}(\mathcal{X}/ \Oo)$ by functoriality.
\end{proof}
Let now $X$ be an elliptic surface with a model $\mathcal{X} / R$ as in the previous section. Let $\psi \colon \mathcal{X}(D_{\Ff_{\p}}) \rightarrow \mathcal{X}$ be the $2\!:\!1$ covering corresponding to some new section $D_{\Ff_{\p}} \in \NS(X_{\Ff_{\p}})$. Let
\[
\psi_{\Ff_{\p}} \colon \mathcal{X}(D_{\Ff_{\p}})_{\Ff_{\p}} = (X_{\Ff_{\p}})^{+} \cup (X_{\Ff_{\p}})^{-} \longrightarrow X_{\Ff_{\p}}
\]
be the induced (finite, flat) morphism between special fibres. By construction
\[
\psi_{\Ff_{\p}}^{*}(D_{\Ff_{\p}}) = \sigma_{\p}(D^{+} + D^{-}),
\]
where $D^{+} \subset X(D_{\Ff_{\p}}) = \mathcal{X}(D_{\Ff_{\p}})_{K}$ is the generic fibre of $\mathcal{D}^{+} \subset \mathcal{X}(D_{\Ff_{\p}})$. Thus the previous lemma implies that $[D_{\Ff_{\p}}] \in \mathrm{CH}_0(\mathcal{X} / R)$ satisfies
\[
0 = \psi^{*}([D_{\Ff_{\p}}]) \in \mathrm{CH}_0(\mathcal{X}(D_{\Ff_{\p}}) / R).
\]
Hence
\[
0 = \psi_{*}(\psi^{*}[D_{\Ff_{\p}}]) = 2 \cdot [D_{\Ff_{\p}}] \in \mathrm{CH}_{0}(\mathcal{X}/R).
\]
This shows that the element $(\dots, 0, D_{\Ff_{\p}}, 0, \dots) \in \bigoplus_{\p \subset \Oo} \NS(X_{\Ff_{\p}})$ has image in $\mathrm{CH}_0(\mathcal{X}/ \Oo)[2]$. Finally, all the components of the singular fibres of $\pi \colon X \rightarrow \Pp^1_K$ are defined over some field extension $K'/K$ of degree $N$. Hence, if $(\dots, 0, F, 0, \dots) \in \bigoplus_{\p \subset \Oo} \NS(X_{\Ff_{\p}})$ describes a component of some singular fibre which is not defined over $K$, we must also have that
\[
N \cdot (\dots, 0, F, 0, \dots) = 0 \quad \text{in } \mathrm{CH}_0(\mathcal{X}/ \Oo)
\]
by a standard push–pull argument and so the image of $\bigoplus_{\p \subset \Oo} \NS(X_{\Ff_{\p}})$ is contained in $\mathrm{CH}_0(\mathcal{X}/ \Oo)[2N] = \mathrm{CH}^2(\mathcal{X})[2N]$, which is a finite group \cite{CT-R}[Theorem 1.1.]. The result then follows since there are only finitely many primes which do not satisfy (**) and since $\NS(X_{\Ff_{\p}})$ is always finitely generated.

\bibliographystyle{alpha}
\bibliography{ref}

@book{MWL,
  author    = {Matthias Schütt and Tetsuji Shioda},
  title     = {Mordell-Weil lattices},
  series    = {Ergebnisse der Mathematik und ihrer Grenzgebiete. 3. Folge},
  volume    = {70},
  publisher = {Springer},
  year      = {2019}
}

@article{MR1177312,
  author    = {Stephen Mildenhall},
  title     = {Cycles in a product of elliptic curves, and a group analogous
               to the class group},
  journal   = {Duke Math. J.},
  fjournal  = {Duke Mathematical Journal},
  volume    = {67},
  year      = {1992},
  number    = {2},
  pages     = {387--406},
  issn      = {0012-7094,1547-7398},
  mrclass   = {14C25 (11G40 11G45 14C35 14J20)},
  mrnumber  = {1177312},
  mrreviewer= {Hur\c sit\ \"Onsiper},
  doi       = {10.1215/S0012-7094-92-06715-9},
  url       = {https://doi.org/10.1215/S0012-7094-92-06715-9},
}

@misc{zbMATH04061376,
  author       = {Alexander Beilinson},
  title        = {Height pairing between algebraic cycles},
  year         = {1987},
  language     = {English},
  howpublished = {{{\(K\)}}-theory, arithmetic and geometry, {Semin}., {Moscow} {Univ}. 1984-86, {Lect}. {Notes} {Math}. 1289, 1-26 (1987).},
  keywords     = {14C35,14C05},
  zbMATH       = {4061376},
  zbl          = {0651.14002}
}

@incollection{MR1744949,
  author    = {Andreas Langer},
  title     = {Finiteness of torsion in the codimension-two {C}how group: an
               axiomatic approach},
  booktitle = {The arithmetic and geometry of algebraic cycles ({B}anff,
               {AB}, 1998)},
  series    = {NATO Sci. Ser. C Math. Phys. Sci.},
  volume    = {548},
  pages     = {277--284},
  publisher = {Kluwer Acad. Publ., Dordrecht},
  year      = {2000},
  isbn      = {0-7923-6193-8},
  mrclass   = {14C15 (14C25 14F42)},
  mrnumber  = {1744949},
  mrreviewer= {Caterina\ Consani},
}

@incollection{MR1300892,
  author    = {Per Salberger},
  title     = {Torsion cycles of codimension {$2$} and {$l$}-adic
               realizations of motivic cohomology},
  booktitle = {S\'eminaire de {T}h\'eorie des {N}ombres, {P}aris, 1991--92},
  series    = {Progr. Math.},
  volume    = {116},
  pages     = {247--277},
  publisher = {Birkh\"auser Boston, Boston, MA},
  year      = {1993},
  isbn      = {0-8176-3741-9},
  mrclass   = {14C25 (11G35 14C15 14F20)},
  mrnumber  = {1300892},
  mrreviewer= {Hur\c sit\ \"Onsiper},
  doi       = {10.1007/978-1-4757-4273-2\_14},
  url       = {https://doi.org/10.1007/978-1-4757-4273-2_14},
}

@article{MR577137,
  author    = {Aleksandr Rojtman},
  title     = {The torsion of the group of {$0$}-cycles modulo rational
               equivalence},
  journal   = {Ann. of Math. (2)},
  fjournal  = {Annals of Mathematics. Second Series},
  volume    = {111},
  year      = {1980},
  number    = {3},
  pages     = {553--569},
  issn      = {0003-486X},
  mrclass   = {14C15 (14C25)},
  mrnumber  = {577137},
  mrreviewer= {D.\ Lieberman},
  doi       = {10.2307/1971109},
  url       = {https://doi.org/10.2307/1971109},
}

@article{CT-R,
  author  = {Jean-Louis Colliot-Th\'el\`ene and William Raskind},
  title   = {Groupe de Chow de codimension deux des vari\'et\'es d\'efinies sur un corps de nombres : un th\'eor\`eme de finitude pour la torsion},
  journal = {Invent. Math.},
  volume  = {105},
  number  = {2},
  pages   = {221--245},
  year    = {1991}
}

@incollection{OTS,
  author    = {Noriyuki Otsubo},
  title     = {Recent progress on the finiteness of torsion algebraic cycles},
  booktitle = {Class field theory — its centenary and prospect},
  editor    = {Keiji Miyake},
  series    = {Adv. Stud. Pure Math.},
  volume    = {30},
  pages     = {313--323},
  year      = {2001}
}

@article{MR1705977,
  author    = {Qing Liu and Dino Lorenzini},
  title     = {Models of curves and finite covers},
  journal   = {Compositio Math.},
  fjournal  = {Compositio Mathematica},
  volume    = {118},
  year      = {1999},
  number    = {1},
  pages     = {61--102},
  issn      = {0010-437X,1570-5846},
  mrclass   = {14G20 (11G20 14H25 14H30)},
  mrnumber  = {1705977},
  mrreviewer= {Carlo\ Gasbarri},
  doi       = {10.1023/A:1001141725199},
  url       = {https://doi.org/10.1023/A:1001141725199},
}

@incollection{MR1086888,
  author    = {Spencer Bloch and Kazuya Kato},
  title     = {{$L$}-functions and {T}amagawa numbers of motives},
  booktitle = {The {G}rothendieck {F}estschrift, {V}ol.\ {I}},
  series    = {Progr. Math.},
  volume    = {86},
  pages     = {333--400},
  publisher = {Birkh\"auser Boston, Boston, MA},
  year      = {1990},
  isbn      = {0-8176-3427-4},
  mrclass   = {11G40 (11G09 14C35 14F30 14G10)},
  mrnumber  = {1086888},
  mrreviewer= {Ehud\ de Shalit},
}

@article{MR2948470,
  author    = {Andreas-Stephan Elsenhans and J\"org Jahnel},
  title     = {The {P}icard group of a {$K3$} surface and its reduction
               modulo {$p$}},
  journal   = {Algebra Number Theory},
  fjournal  = {Algebra \& Number Theory},
  volume    = {5},
  year      = {2011},
  number    = {8},
  pages     = {1027--1040},
  issn      = {1937-0652,1944-7833},
  mrclass   = {14C22 (14D15 14J28 14Q10)},
  mrnumber  = {2948470},
  mrreviewer= {Barry\ H.\ Dayton},
  doi       = {10.2140/ant.2011.5.1027},
  url       = {https://doi.org/10.2140/ant.2011.5.1027},
}

@incollection{MR563468,
  author    = {Michel Raynaud},
  title     = {``{$p$}-torsion''\ du sch\'ema de {P}icard},
  booktitle = {Journ\'ees de {G}\'eom\'etrie {A}lg\'ebrique de {R}ennes
               ({R}ennes, 1978), {V}ol. {II}},
  series    = {Ast\'erisque},
  volume    = {64},
  pages     = {87--148},
  publisher = {Soc. Math. France, Paris},
  year      = {1979},
  mrclass   = {14K30 (14L05)},
  mrnumber  = {563468},
  mrreviewer= {H.\ Gillet},
}

@book{MR1644323,
  author    = {William Fulton},
  title     = {Intersection theory},
  series    = {Ergebnisse der Mathematik und ihrer Grenzgebiete. 3. Folge. A
               Series of Modern Surveys in Mathematics [Results in
               Mathematics and Related Areas. 3rd Series. A Series of Modern
               Surveys in Mathematics]},
  volume    = {2},
  edition   = {Second},
  publisher = {Springer-Verlag, Berlin},
  year      = {1998},
  pages     = {xiv+470},
  isbn      = {3-540-62046-X; 0-387-98549-2},
  mrclass   = {14C17 (14-02)},
  mrnumber  = {1644323},
  doi       = {10.1007/978-1-4612-1700-8},
  url       = {https://doi.org/10.1007/978-1-4612-1700-8},
}

@inproceedings{MR1159204,
  author    = {Spencer Bloch},
  title     = {Algebraic {$K$}-theory, motives, and algebraic cycles},
  booktitle = {Proceedings of the {I}nternational {C}ongress of
               {M}athematicians, {V}ol.\ {I}, {II} ({K}yoto, 1990)},
  pages     = {43--54},
  publisher = {Math. Soc. Japan, Tokyo},
  year      = {1991},
  isbn      = {4-431-70047-1},
  mrclass   = {19E15 (11G09 14C25)},
  mrnumber  = {1159204},
  mrreviewer= {L.\ N.\ Vaserstein and Frank\ A.\ Arlinghaus},
}

@incollection{MR337993,
  author    = {Pierre Deligne and Michael Rapoport},
  title     = {Les sch\'emas de modules de courbes elliptiques},
  booktitle = {Modular functions of one variable, {II} ({P}roc. {I}nternat.
               {S}ummer {S}chool, {U}niv. {A}ntwerp, {A}ntwerp, 1972)},
  series    = {Lecture Notes in Math.},
  volume    = {349},
  pages     = {143--316},
  publisher = {Springer, Berlin-New York},
  year      = {1973},
  mrclass   = {14K10 (10D05)},
  mrnumber  = {337993},
  mrreviewer= {T.\ Oda},
}

@article{MR1417619,
  author    = {Andreas Langer and Shuji Saito},
  title     = {Torsion zero-cycles on the self-product of a modular elliptic
               curve},
  journal   = {Duke Math. J.},
  fjournal  = {Duke Mathematical Journal},
  volume    = {85},
  year      = {1996},
  number    = {2},
  pages     = {315--357},
  issn      = {0012-7094,1547-7398},
  mrclass   = {14C25 (11G35)},
  mrnumber  = {1417619},
  mrreviewer= {Claudio\ Pedrini},
  doi       = {10.1215/S0012-7094-96-08514-2},
  url       = {https://doi.org/10.1215/S0012-7094-96-08514-2},
}
\end{document}